\newcommand{\R}{{\mathbb R}}
\newcommand{\ass}{\,\mbox{:=}\,}
\begin{document}

\title{Differential Inversion of the Implicit Euler Method: 
Symbolic Analysis}

\author{Uwe Naumann
\thanks{Software and Tools for Computational Engineering,
               RWTH Aachen University,
	       52056 Aachen, Germany,
	      {\tt naumann@stce.rwth-aachen.de}}
	      }

\maketitle

\begin{abstract}
The implicit Euler method integrates systems of ordinary differential equations 
$$\frac{d x}{d t}=G(t,x(t))$$ with differentiable right-hand side 
	$G : \R \times \R^n \rightarrow \R^n$ from an initial state 
	$x=x(0) \in \R^n$ to a target time $t \in \R$ as $x(t)=E(t,m,x)$ using an
	equidistant discretization of the time interval $[0,t]$ yielding $m>0$ time steps. We present a method for efficiently computing the product of its inverse Jacobian
	$$
	(E')^{-1} \equiv
	\left (\frac{d E}{d x}\right )^{-1} \in \R^{n \times n}
	$$
	with a given vector $v \in \R^n.$
	We show that the differential inverse $(E')^{-1} \cdot v$ 
	can be evaluated for given $v \in \R^n$ with 
a computational cost of $\mathcal{O}(m \cdot n^2)$ as opposed to the standard
$\mathcal{O}(m \cdot n^3)$ or, naively, even $\mathcal{O}(m \cdot n^4).$
The theoretical results are supported by actual run times. 
A reference implementation is provided.
\end{abstract}

\section{Introduction} \label{sec:0}

The following is inspired by \cite{Naumann2024AME}. Therein a given 
implementation of a system of nonlinear equations
\begin{equation} \label{eqn:0}
y=f(x)=0\; , \quad  
f: \R^n \rightarrow \R^n
\end{equation}
as a differentiable program with Jacobian
\begin{equation} \label{eqn:jac}
	f' \equiv \frac{d f}{d x} \in \R^{n \times n}
\end{equation}
is assumed to be composed of differentiable elemental
subprograms
\begin{equation} \label{eqn:elem}
f_i : \R^n \rightarrow \R^n : x_i\ass f_i(x_{i-1}) 
\end{equation}
for $i=1,\ldots,m.$
Hence,
\begin{equation} \label{eqn:1}
x_m \ass f(x_0)=f_m(\ldots f_1(x_0) \ldots ) \; ,
\end{equation}
where $x_0=x$ and $y=x_m.$ 
Application of the chain rule of differentiation to
Equation~(\ref{eqn:1}) yields
$$
f'=f'_m \cdot \ldots \cdot f'_1 \; .
$$ 
We write ``$\ass$'' for imperative assignment. Mathematical 
equality is denoted as ``$=$'' and ``$\equiv$'' is to be read in the sense of
``is defined as.'' Approximate equality is denoted by ``$\approx$.''
The computational cost of each $f_i$ is expected to be
at least of order $n^2$ ($\mathcal{O}(n^2)$), which complies with most practically relevant
scenarios. 
Consequently, the computational cost of evaluating $f$ is $\mathcal{O}(m \cdot n^2).$

Without loss of generality (w.l.o.g.), all $f_i$ are assumed 
to be nonlinear, that is, $f'_i=f'_i(x_{i-1})$. 
Newton's method solves Equation~(\ref{eqn:0}) by driving the residual
$y=f(x)$ iteratively towards zero as
\begin{equation} \label{eqn:newton}
	x_{j+1}=x_j - \left ( f'(x_j) \right )^{-1} \cdot f(x_j) \; , \quad j=0,\ldots,p-1 \; .
\end{equation}
Convergence after $p \geq 0$ iterations is defined, for example, as the norm
of the residual $f(x_{j+1})$ falling below a given threshold $0 \leq \delta \ll 1.$ Applicability of Newton's method depends on a number of conditions, 
which we assume to be satisfied, see, for example, \cite{Deuflhard2004NMf,Kelley2003SNE}. Most importantly, a starting
value $x_0$ within proximity of the solution is typically required. 
Note the use of distinct indexes $i$ and $j$ in Equations~(\ref{eqn:elem}) and
(\ref{eqn:newton}), respectively. They will be combined as part of a unified 
notation in the following.

The Jacobian of $f$ can be computed with machine accuracy
by Algorithmic 
Differentiation (AD) \cite{Griewank2008EDP}. The vector tangent mode of AD (also: tangent AD) yields 
\begin{equation}
	(y,\dot{Y})=\dot{f}(x,\dot{X}) \equiv (f(x), f'(x) \cdot \dot{X}) 
\end{equation}
for $\dot{X} \in \R^{n \times \dot{n}}$ with $1 \leq \dot{n} \leq n.$
It enables the accumulation of (a dense) $f'$ with a computational cost of 
$\mathcal{O}(m \cdot n^3)$ by setting $\dot{X}$ equal to the identity
$I_n \in \R^{n \times n}.$ Potential sparsity of $f'$ can and should be exploited \cite{Gebremedhin2005WCI}. Corresponding numerical approximations can be 
obtained with a cost of the same order by finite differences. 

Adjoint AD in vector mode yields 
\begin{equation} \label{eqn:aad}
	(y,\bar{X})=\bar{f}(x,\bar{Y}) \equiv (f(x), (f'(x))^T \cdot \bar{Y}) 
\end{equation}
for $\bar{Y} \in \R^{m \times \bar{m}},$ $1 \leq \bar{m} \leq m.$
Again, the computational cost of accumulating (a dense) $f'$ is equal to
$\mathcal{O}(m \cdot n^3)$ as $\bar{Y}=I_n.$ 
Adjoint AD is of particular interest in the context of large-scale 
gradient-based numerical methods. In such cases $\bar{Y}=1$ yields cheap
gradients with constant relative (with respect to the cost of evaluating $f : \R^n \rightarrow \R$) 
computational cost.
Tangent AD is typically preferred for square Jacobians to avoid the overhead
induced by data flow reversal in adjoint AD \cite{Griewank1992ALG,Naumann2008DRi}. Details are beyond 
the scope of this paper. Refer to {\tt www.autodiff.org} for a 
comprehensive bibliography on AD.

\begin{lemma} \label{lem:1}
	$$
	\left (f'\right )^{-1} = \left ( f'_m \cdot \ldots \cdot f'_1 \right )^{-1} =
	(f'_1)^{-1} \cdot \ldots \cdot (f'_m)^{-1} 
	$$
\end{lemma}
\begin{proof}
	\begin{align*}
		(f')^{-1} \cdot f'&=
	\left ( f'_m \cdot \ldots \cdot f'_1 \right )^{-1} 
	\cdot (f'_m \cdot \ldots \cdot f'_1) \\
		&=
		(f'_1)^{-1} \cdot \ldots \cdot \underset{=I_n}{\underbrace{((f'_m)^{-1} \cdot f'_m)}} \cdot \ldots \cdot f'_1 =I_n \; .
	\end{align*}
\end{proof}
With Lemma~\ref{lem:1}, the Newton step 
$\Delta x_j=-(f'(x_j))^{-1} \cdot f(x_j)$ becomes equal to 
\begin{equation} \label{eqn:3}
        \begin{split}
                \Delta x_j&=
		-(f'_m(x_{m-1,j}) \cdot \ldots \cdot f'_1(x_{0,j}))^{-1} \cdot f(x_{0,j}) \\
		&=-f'_1(x_{0,j})^{-1} \cdot \ldots \cdot f'_m(x_{m-1,j})^{-1} \cdot f(x_{0,j}) \; ,
        \end{split}
\end{equation}
where $x_j=x_{0,j}.$
The Jacobians $f'(x_j)$ are assumed to be invertible at all iterates. Invertibility of all $f'_i=f'_i(x_{i-1,j})$ 
follows immediately.
Evaluation of Equation~(\ref{eqn:3}) as
$$
                \Delta x_j=
		-f'_1(x_{0,j})^{-1} \cdot (\ldots \cdot (f'_m(x_{m-1,j})^{-1} \cdot f(x_{0,j})) \ldots )
		$$
amounts to the solution of $2m$ linear systems. 
This method is matrix-free in the sense that a potentially dense $f'$ is not 
computed explicitly. Potential sparsity of the $f'_i$ can be exploited
in the context of {\em structural analysis} as discussed in 
\cite{Naumann2024AME}, where, for example, a reduction of the computational 
cost of differential inversion from $\mathcal{O}(m \cdot n^3)$ to 
$\mathcal{O}(m \cdot n^2)$ is reported for tridiagonal $f'_i.$

This paper's focus lies on {\em symbolic analysis}. Its results are applicable
to dense (as well as to sparse) $f'_i.$ Similar reductions in computational cost can be achieved. 
Section~\ref{sec:1} recalls the implicit Euler and Newton methods as essential prerequisites. 
Section~\ref{sec:3} represents the heart of this contribution. Starting with 
a naive (black-box) approach to differential inversion of any differentially 
invertible program, and of the implicit Euler method in particular, we discuss
two improvements yielding a reduction of the computational cost by 
$\mathcal{O}(n^2).$ Reference implementations for all three approaches are
presented in Section~\ref{sec:4} together with run time measurements 
in Section~\ref{sec:5}. Conclusions are drawn in Section~\ref{sec:6}.

\section{Prerequisites} \label{sec:1}
The implicit (also: backward) Euler method 
\begin{equation} \label{eqn:euler}
	E : \R \times \mathbb{N} \times \R^n \rightarrow \R^n : y=E(t,m,x)
	\end{equation}
integrates systems of ordinary differential equations 
\begin{equation} \label{ode}
\frac{d x}{d t}=G(t,x(t)) 
\end{equation}
with differentiable right-hand side 
	$G : \R \times \R^n \rightarrow \R^n$ from a given initial state 
$x=x(0) \in \R^n$ to a given target time $t=\check{t} \in \R$ as $y=x(\check{t})=E(\check{t},\check{m},x(0))$ using an
	equidistant discretization of the time interval $[0,\check{t}]$ with a given number $m=\check{m}>0$ time steps. 
In the following, $E$ is required to be {\em differentially invertible}
at $\check{t},\check{m},x(0)$, that is, it needs to be differentiable and its 
Jacobian must be invertible.
The corresponding {\em differential inverse}
	$$
	\left (\frac{d E}{d x}(\check{t},\check{m},x(0)) \right )^{-1} \cdot v \in \R^{n}
	$$
	is, for example, required in the context of the inverse problem, aiming to 
	estimate the initial state $x(0)$ for given observations of $x(\check{t}).$ 
	Feasibility of this inversion depends on a number of conditions, 
	which are assumed to be satisfied, see, for example, \cite{Chavent2010NLS}. Generalization for scenarios that 
	require regularization is the subject of ongoing investigations.

\subsection{Integration (Outer Iteration)}

The implicit Euler method approximates the time derivative in 
Equation~(\ref{ode}) with a backward finite difference obtained by
truncation of the Taylor expansion 
$$
x(t)=x(t-\Delta t) +
\frac{d x}{d t}\cdot \Delta t+ {\mathcal O}(\Delta t^2) 
$$
of $x$ at $t-\Delta t$ after the first-order term, yielding the linearization
$$
\frac{d x}{d t} \approx \frac{x(t)-x(t-\Delta t)}{\Delta t} \; .
$$
Equation~(\ref{ode}) is replaced by its discrete equivalent
$$
\frac{x(t)-x(t-\Delta t)}{\Delta t}=G(t,x(t)) \; . 
$$
Time steps of size $0 < \Delta t \leq t$ yield a sequence of iterates
$x_{i},$ $i=1,\ldots m,$ such that
\begin{equation} \label{dode}
\frac{x_{i}-x_{i-1}}{\Delta t} = G(i \cdot \Delta t,x_{i})
\end{equation}
and where $x_{i}=x(i\cdot \Delta t)$ implying $x_{m}=x(m \cdot \Delta t)=x(t).$ Uniform time stepping yields $\Delta t=\frac{t}{m}.$
Convergence of the implicit Euler method 
depends on a number of conditions, 
which are assumed to be satisfied. See, for example, \cite{Butcher2016NMf} for details.

\subsection{Root Finding (Inner Iteration)}

Equation~(\ref{dode}) implies the system of nonlinear equations 
\begin{equation} \label{res}
F(x_{i},x_{i-1},\Delta t)=x_{i}-x_{i-1}-\Delta t \cdot G(i \cdot \Delta t,x_{i})=0 \; .
\end{equation}
It needs to be solved $m$ times to obtain the solution $x(t).$
Linearization of $F$ at $x_{i}$ yields
\begin{align*}
0=F(x_{j+1,i},x_{i-1},\Delta t) &= F(x_{j,i}+\Delta x_{j,i},x_{i-1},\Delta t) \\
&=
F(x_{j,i},x_{i-1},\Delta t)+
\frac{d F}{d x_{i}}(x_{j,i},x_{i-1},\Delta t) \cdot \Delta x_{j,i} \; .
\end{align*}
Setting
$F' \equiv \frac{d F}{d x_{i}},$
the solution of the resulting linear system
$$
F'(x_{j,i},x_{i-1},\Delta t) \cdot 
\Delta x_{j,i} = - F(x_{j,i},x_{i-1},\Delta t)
$$
is followed by updates
$$
x_{j+1,i}= x_{j,i}+\Delta x_{j,i}, \quad j=0,\ldots,p \; ,
$$
where, for the purpose of cost analysis, convergence is assumed to be reached
after (at most) $p$ Newton steps for all $m$ iterations of the implicit Euler method. W.l.o.g., we use $x_{0,i}=x_{i-1}.$ 

Differentiation of Equation~(\ref{res}) with respect to $x_{i}$ 
yields
$$
F'(x_{j,i},x_{i-1},\Delta t)
=I_n-x_{i-1}-\Delta t \cdot G'(i \cdot \Delta t,x_{j,i})=0 
$$
which is solved by Newton's method 
$$N : \R^n \times \R^n \times \R \rightarrow \R^n : \quad x_{i}=N(x_{0,i},x_{i-1},\Delta t) $$ 
for 
given $G' \equiv \frac{d G}{d x_{i}}.$

Implicit Euler integration
amounts to the solution of $m$ systems of nonlinear equations at the expense
of $\mathcal{O}(p \cdot n^3)$ each. Both accumulation and factorization of
(the dense)
$F'$ induce a computational cost of $\mathcal{O}(n^3)$ assuming evaluation of
$F$ at a cost of $\mathcal{O}(n^2).$ Potential sparsity of $F'$ can and should 
be 
exploited.
The total cost of the implicit Euler method can, hence, be estimated as 
$\mathcal{O}(m \cdot p \cdot n^3)$.

\section{Differential Inversion} \label{sec:3}

We present three alternatives for differential inversion of the implicit Euler method.

\subsection{Black-box Approach}

The implicit Euler method 
\begin{equation} \label{implicit_euler_scheme}
x_{i}=N(x_{0,i},x_{i-1},\Delta t)\; , \quad i=1,\ldots,m \; ,
\end{equation}
can be differentiated naively as a black box using, for example, tangent AD,
at the expense of ${\mathcal O}(n)$ evaluations of $E,$ that is,
at ${\mathcal O}(m \cdot p \cdot n^4);$ see also Algorithm~\ref{alg3.1}.
\begin{algorithm} 
\begin{alignat*}{1}
\dot{X}_{0}&\ass I_n \\
(x_{m}, \dot{X}_{m})&\ass \dot{E}(\check{t},\check{m},x_{0},\dot{X}_{0}) \\
\frac{d x_{m}}{d x_{0}}&\ass \dot{X}_{m} \\
w&\ass \left (\frac{d x_{m}}{d x_{0}}\right )^{-1} \cdot v
\end{alignat*}
\caption{Naive Differential Inversion of the Implicit Euler Method} \label{alg3.1}
\end{algorithm}

Denoting $E'=\frac{d E}{d x},$
the resulting Jacobian
$$
E'=E'(\check{t},\check{m},x_{0}) =
\frac{d x_{m}}{d x_{0}} \in \R^{n \times n}
$$
is assumed to be invertible.
It becomes the system matrix of the linear system
$$
E' \cdot w = v
$$
whose solution yields $$w = \left (E' \right )^{-1} \cdot v$$
with an additional, yet insignificant, cost of ${\mathcal O}(n^3).$ 
The computational effort is clearly dominated by the differentiation of
the implicit Euler method.

\subsection{Partially Symbolic Approach}

\begin{lemma} \label{lem:2}
	\begin{equation} \label{eqn:alg3.2}
		E'(\check{t},\check{m},x_{0}) =
	\left ( \frac{d F}{d x_m}\right )^{-1} \cdot  \ldots  \cdot 
	\left ( \frac{d F}{d x_{2}}\right )^{-1} \cdot 
	\left ( \frac{d F}{d x_{1}}\right )^{-1} \; .
	\end{equation}
\end{lemma}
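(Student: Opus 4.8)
The plan is to combine the chain rule with the implicit function theorem. Since $E' = \frac{dx_m}{dx_0}$ is the total derivative of the composition of the $m$ implicit Euler steps in \refeqn{implicit_euler_scheme}, I would first factor it, by the chain rule, as
$$
E' = \frac{dx_m}{dx_0} = \frac{dx_m}{dx_{m-1}} \cdot \frac{dx_{m-1}}{dx_{m-2}} \cdots \frac{dx_1}{dx_0} \; .
$$
This reduces the claim to showing that each single-step sensitivity equals $\left(\frac{dF}{dx_i}\right)^{-1}$; note that the ordering already matches the right-hand side of \refeqn{eqn:alg3.2}, with the highest index on the left.

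For a fixed step $i$, the state $x_i$ is defined implicitly as a function of $x_{i-1}$ through $F(x_i, x_{i-1}, \Delta t) = 0$. I would therefore differentiate this equation totally with respect to $x_{i-1}$. The implicit function theorem, applicable because $\frac{dF}{dx_i}$ is invertible, gives
$$
\frac{dF}{dx_i} \cdot \frac{dx_i}{dx_{i-1}} + \frac{\partial F}{\partial x_{i-1}} = 0 \; ,
$$
so that $\frac{dx_i}{dx_{i-1}} = -\left(\frac{dF}{dx_i}\right)^{-1} \cdot \frac{\partial F}{\partial x_{i-1}}$. The crucial observation, which makes the formula collapse cleanly, is that the explicit residual $F = x_i - x_{i-1} - \Delta t \cdot G(i \cdot \Delta t, x_i)$ from \refeqn{res} depends on $x_{i-1}$ only through the linear term $-x_{i-1}$; hence $\frac{\partial F}{\partial x_{i-1}} = -I_n$ and $\frac{dx_i}{dx_{i-1}} = \left(\frac{dF}{dx_i}\right)^{-1}$. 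Substituting this into the chain-rule product above yields \refeqn{eqn:alg3.2}.

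The main obstacle is conceptual rather than computational: one must carefully distinguish the total derivative $\frac{dx_i}{dx_{i-1}}$, which accounts for $x_i$ readjusting so as to keep $F = 0$, from the two partial derivatives of $F$ with respect to its vector arguments, and one must confirm that the assumed invertibility of $E'$ (together with the factorization above) indeed guarantees invertibility of each $\frac{dF}{dx_i}$ so that the implicit function theorem applies at every step. Once this bookkeeping is in place, the simple structure $\frac{\partial F}{\partial x_{i-1}} = -I_n$ does all the remaining work.
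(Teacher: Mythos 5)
Your proposal is correct and follows essentially the same route as the paper: factor $E'$ by the chain rule into the one-step sensitivities $\frac{d x_i}{d x_{i-1}}$, then differentiate $F(x_i,x_{i-1},\Delta t)=0$ implicitly and use $\frac{\partial F}{\partial x_{i-1}}=-I_n$ to conclude $\frac{d x_i}{d x_{i-1}}=\left(\frac{d F}{d x_i}\right)^{-1}$. One small remark: the invertibility logic runs in the opposite direction from what you suggest --- each $\frac{d F}{d x_i}=I_n-\Delta t\cdot G'$ is invertible by the standing assumptions underlying Newton's method (not deduced from invertibility of $E'$), and invertibility of $E'$ then follows from the factorization.
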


\begin{proof}
Equation~(\ref{res}), that is
$F(x_{i},x_{i-1},\Delta t)=0,$ defines $x_i$ implicitly as a function of 
$x_{i-1}.$
Differentiation with respect to $x_{i-1}$ yields
$$
\frac{d F}{d x_{i-1}}
	=
\frac{d F}{d x_{i}} \cdot
	\frac{d x_{i}}{d x_{i-1}} + \frac{\partial F}{\partial x_{i-1}}
=0
$$
and, hence,
$$
	\frac{d x_{i}}{d x_{i-1}}=-\left (\frac{d F}{d x_{i}} \right )^{-1} \cdot \frac{\partial F}{\partial x_{i-1}}
=-\left (\frac{d F}{d x_{i}} \right )^{-1} \cdot -I_n 
=\left (\frac{d F}{d x_{i}} \right )^{-1} \; .
$$
According to the chain rule of differentiation,
\begin{align*}
E'(\check{t},\check{m},x_{0}) =
	\frac{d x_{m}}{d x_{0}} &=
	\frac{d x_{m}}{d x_{m-1}} \cdot  \ldots  \cdot 
	\frac{d x_{2}}{d x_{1}} \cdot 
	\frac{d x_{1}}{d x_{0}} \ldots \\ &=
	\left ( \frac{d F}{d x_m}\right )^{-1} \cdot  \ldots  \cdot 
	\left ( \frac{d F}{d x_{2}}\right )^{-1} \cdot 
	\left ( \frac{d F}{d x_{1}}\right )^{-1} \ldots \; ,
\end{align*}
which completes the proof.
\end{proof}
\begin{algorithm}
\begin{alignat*}{2}
\dot{X}_{0}&\ass I_n \\
\text{for}~&i=1,\ldots,m: \\
&&\frac{d x_{i}}{d x_{i-1}}&\ass \left (\frac{d F}{d x_{i}}(x_{i},x_{i-1},\Delta t) \right )^{-1} = \left (L_i \cdot U_i \right )^{-1} \\
&&\dot{X}_{i} &\ass \frac{d x_{i}}{d x_{i-1}} \cdot \dot{X}_{i-1} =\left (L_i \cdot U_i \right )^{-1} \cdot \dot{X}_{i-1} \\
\frac{d x_{m}}{d x_{0}}&=\dot{X}_{m} \\
	w&\ass \left (\frac{d x_{m}}{d x_{0}}\right )^{-1} \cdot v
\end{alignat*}
\caption{Partially Symbolic Differential Inversion of the Implicit Euler Method} \label{alg3.2}
\end{algorithm}
The resulting Algorithm~\ref{alg3.2} formalizes the corresponding augmentation of the implicit Euler method. AD of the Newton algorithm is avoided, thus 
reducing the computational cost by a factor of $\mathcal{O}(p \cdot n);$ 
see also \cite{Gilbert1992Ada,Naumann2015NLS}. Associativity of matrix multiplication ensures feasibility of
bracketing Equation~(\ref{eqn:alg3.2}) from the right. The total computational cost of differential inversion becomes $\mathcal{O}(m \cdot n^3)$ due to 
repeated inversion (w.l.o.g., using $LU$ decomposition) of the $\frac{d F}{d x_{i}}$ in addition to the underlying
implicit Euler scheme.

\subsection{Fully Symbolic}
A small additional step yields the fully symbolic method.
\begin{theorem}
\begin{equation} \label{Einv}
	E'(t,m,x_{0})^{-1} \cdot v=\frac{d F}{d x_{i}}(x_{1},x_{0},\Delta t) \cdot \ldots \cdot 
\frac{d F}{d x_{i}}(x_{m},x_{m-1},\Delta t) \cdot v \; ,
\end{equation}
\end{theorem}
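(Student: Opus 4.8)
The plan is to derive the statement directly from the factorization established in Lemma~\ref{lem:2} by inverting both sides and invoking the reversal identity of Lemma~\ref{lem:1}. First I would recall that Lemma~\ref{lem:2} expresses the forward Jacobian as
$$
E'(\check{t},\check{m},x_{0}) = \left(\frac{d F}{d x_m}\right)^{-1} \cdot \ldots \cdot \left(\frac{d F}{d x_{1}}\right)^{-1} \; ,
$$
a product of $m$ inverse factors in descending index order. Taking the inverse of both sides and applying Lemma~\ref{lem:1} (the inverse of a product equals the product of inverses in reverse order) cancels the inner inverses and reverses the order, yielding
$$
\left(E'\right)^{-1} = \frac{d F}{d x_{1}} \cdot \frac{d F}{d x_{2}} \cdot \ldots \cdot \frac{d F}{d x_m} \; .
$$
Right-multiplication by $v$ then produces exactly Equation~(\ref{Einv}), where each factor is understood to be evaluated at the iterate $(x_{i},x_{i-1},\Delta t)$ produced by the implicit Euler sweep.

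The second thing I would stress is the computational payoff implicit in the statement, since the algebraic identity alone is elementary. Associativity permits bracketing the product in Equation~(\ref{Einv}) from the right, so that the evaluation proceeds as a sequence of $m$ matrix--vector products, each of cost $\mathcal{O}(n^2)$. Crucially --- and this is the whole point of the ``fully symbolic'' variant --- the factors $\frac{d F}{d x_{i}} = I_n - \Delta t \cdot G'(i \cdot \Delta t,x_{i})$ appear here \emph{without} any inverse, so no $LU$ decomposition or linear solve is required, in contrast to Algorithm~\ref{alg3.2}. This removes the $\mathcal{O}(n^3)$ factorization cost per step and brings the total down to $\mathcal{O}(m \cdot n^2)$.

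I anticipate no genuine mathematical obstacle; the only point demanding care is bookkeeping the index order through the two reversals. The order must reverse once in passing from the chain rule (outer-to-inner, i.e.\ $m$ down to $1$) inside Lemma~\ref{lem:2}, and then reverse again when the inverse of the product is taken, so that the final factors appear in ascending order $1,\ldots,m$ matching the sequence in which the implicit Euler iterates are generated. Verifying invertibility of each $\frac{d F}{d x_{i}}$ --- already guaranteed, since $E'$ is assumed invertible and each factor of a nonsingular product must itself be nonsingular --- closes any remaining gap.
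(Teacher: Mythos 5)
Your proof is correct and follows essentially the same route as the paper: invert the factorization of $E'$ from Lemma~\ref{lem:2}, apply the product-inverse reversal identity (Lemma~\ref{lem:1}) so that the inner inverses cancel and the factors $\frac{d F}{d x_{i}}$ appear in ascending order, then multiply by $v$. The additional remarks on right-bracketing, the avoided $LU$ factorizations, and invertibility of the factors match the paper's surrounding discussion rather than its proof, but they are accurate.
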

\begin{proof}
	This result follows immediately from Lemma~\ref{lem:2}.
	Equation~(\ref{eqn:alg3.2}) implies
\begin{align*}
	E'(t,m,x_{0})^{-1} 
	&= \left (\frac{d x_{m}}{d x_{m-1}} \cdot \ldots
	\cdot \frac{d x_{1}}{d x_{0}} \right )^{-1} \\
	&= \left ( \frac{d x_{1}}{d x_{0}} \right )^{-1} \cdot \ldots
\cdot \left ( \frac{d x_{m}}{d x_{m-1}} \right )^{-1} \\
	&= 
	\frac{d F}{d x_{1}}(x_1,x_0,\Delta t) \cdot \ldots \cdot 
	\frac{d F}{d x_{1}}(x_m,x_{m-1},\Delta t)
\end{align*}
and, hence, the claim of the theorem.
\end{proof}
Matrix-vector products involving the $\frac{d F}{d x_{i}}(x_i,x_{i-1},\Delta t)$
are performed for $i=m,\ldots,1$ as formalized in 
Algorithm~\ref{alg3.3}. 
Storage of $(n \times n)$-Jacobians at the end of each 
of the $m$
implicit Euler steps on a stack allows for fast differential inversion  
at the expense of the additional memory requirement of ${\mathcal O}(m \cdot n^2).$
The computational cost becomes equal to $\mathcal{O}(m \cdot n^2)$ as explicit matrix inversion
can be avoided entirely.

Note the analogy with adjoint AD. According to Equation~(\ref{eqn:aad}), 
the latter evaluates 
for differentiable programs as in Equation~(\ref{eqn:1}) 
\begin{align*}
	(f')^T \cdot \bar{Y} &=
	(f'_m \cdot \ldots \cdot f'_1)^T \cdot \bar{Y} \\
	&=
	(f'_1)^T \cdot \left ( \ldots \cdot \left ( (f'_m)^T \cdot \bar{Y} \right ) \ldots \right ) 
\end{align*}
with a computational cost of $\mathcal{O}(\bar{m} \cdot n^2)$ for 
$\bar{Y} \in \R^{n \times \bar{m}}.$ Bracketing from the left would result
in a cost of $\mathcal{O}(n^3).$ Most prominently, $\bar{m}=1$ for gradients.

\begin{algorithm}
\begin{alignat*}{2}
\text{for}~i&=1,\ldots,m: \\
	&&\text{push}&\left (\frac{d F}{d x_{i}}(x_{i},x_{i-1},\Delta t) \right ) \\
	w\ass &\;v \\
\text{for}~i&=m,\ldots,1: \\
	&&\text{pop}&\left (\frac{d x_i}{d x_{i-1}} \right ) \\
	&&w&\ass \frac{d x_{i}}{d x_{i-1}} \cdot w \\
\end{alignat*}
\caption{Fully Symbolic Differential Inversion of the Implicit Euler Method} \label{alg3.3}
\end{algorithm}

\section{Reference Implementation} \label{sec:4}

Our reference implementation is based on the following instance of the popular 
Lotka-Volterra equations \cite{Berryman1992Toa}
\begin{equation} \label{eqn:lv}
	\begin{split}
		\frac{d x_0}{d t}&=1.1\cdot x_0-0.5\cdot x_0\cdot x_1 \\
\frac{d x_1}{d t}&=-0.75\cdot x_1+0.25\cdot x_0\cdot x_1
	\end{split}
\end{equation} 
modelling the instantaneous growth rates of two populations consisting of
prey ($x_0$) and predators ($x_1$). Starting from given population sizes,
we integrate to time $\check{t}=1$ using $\check{m}=10^3$ time steps. The
code is written in C++ with Eigen\footnote{https://eigen.tuxfamily.org} 
employed for linear algebra.

The right-hand side $G$ from Equation~({\ref{ode}) 
\begin{lstlisting}
template<typename T>
VT<T> G(const VT<T> &x) {
  VT<T> r;
  r(0)=1.1*x(0)-0.5*x(0)*x(1); // prey
  r(1)=-0.75*x(1)+0.25*x(0)*x(1); // predators
  return r;
}
\end{lstlisting}
uses statically sized base-type-generic vector
\begin{lstlisting}
template<typename T>
using VT=Eigen::Vector<T,n>;
\end{lstlisting}
and matrix 
\begin{lstlisting}
template<typename T>
using MT=Eigen::Matrix<T,n,n>;
\end{lstlisting}
types provided by the Eigen library for given global \lstinline{n=2}.
Templates facilitate instantiation with different base types \lstinline{T}.
An implementation of the corresponding Jacobian follows immediately.
\begin{lstlisting}
template<typename T>
MT<T> dGdx(const VT<T> &x) {
  MT<T> Gx;
  Gx(0,0)=1.1-0.5*x(1);
  Gx(0,1)=-0.5*x(0);
  Gx(1,0)=0.25*x(1);
  Gx(1,1)=-0.75+0.25*x(0);
  return Gx;
}
\end{lstlisting}
The residual $F$ from Equation~(\ref{res}) is implemented as
\begin{lstlisting}
template<typename T>
VT<T> F(const VT<T> &x, const VT<T> &x_prev) {
  return x-x_prev-G(x)/m;
}
\end{lstlisting}
where $x_i$ is represented by \lstinline{x} and 
$x_{i-1}$ by \lstinline{x_prev}. The global integer variable \lstinline{m} 
holds the value of $\check{m}.$ An implementation of the Jacobian of the 
residual with respect to $x_i$ follows immediately.
\begin{lstlisting}
template<typename T>
MT<T> dFdx(const VT<T> &x) {
  return MT<T>::Identity()-dGdx(x)/m;
}
\end{lstlisting}
It is used in Newton's method as follows.
\begin{lstlisting}
template<typename T>
VT<T> N(VT<T> x) {
  VT<T> x_prev=x,r=F(x,x_prev);
  do {
    x=x+dFdx(x).lu().solve(-r);
    r=F(x,x_prev);
  } while (r.norm()>1e-12);
  return x;
}
\end{lstlisting}
The implicit Euler method amounts to \lstinline{m} consecutive calls of
the above.
\begin{lstlisting}
template<typename T>
VT<T> E(VT<T> x) {
  for (int i=0;i<m;i++) x=N(x);
  return x;
}
\end{lstlisting}

\subsection{Implementation of Algorithm~\ref{alg3.1}}

Naive application of AD to the given implementation of $E$ 
is to be avoided due to suboptimal computational cost. Nevertheless,
we include 
an example based on the AD library dco/c++\footnote{https://nag.com/automatic-differentiation} \cite{lotz2011dco} for reference. Replication with other readily
available AD software for C++, for example, Adept \cite{Hogan2014FRM}, ADOL-C \cite{Griewank1996AAC}, CoDiPack \cite{sagebaum2019high} should be straightforward. Refer to {\tt http://www.autodiff.org} for a more complete list of AD software tools.

The function \lstinline{E_dEdx} returns a pair consisting of the 
solution for the initial value problem 
and its Jacobian at the initial state passed as the sole argument \lstinline{x}.
\begin{lstlisting}[numbers=left,numberstyle=\scriptsize]
std::pair<VT<double>,MT<double>> E_dEdx(VT<double> x) {
 Eigen::Vector<typename dco::gt1v<double,n>::type,n> x_t;
  for (int i=0;i<n;i++) {
    dco::value(x_t(i))=x(i);
    dco::derivative(x_t(i))[i]=1;
  }
  x_t=E(x_t);
  MT<double> E_x;
  for (int i=0;i<n;i++) {
    x(i)=dco::value(x_t(i));
    for (int j=0;j<n;j++)
      E_x(i,j)=dco::derivative(x_t(i))[j];
  }
  return std::make_pair(x,E_x);
}
\end{lstlisting}
dco/c++ provides the statically sized (\lstinline{n=2}) vector tangent type 
\lstinline{dco::gt1v<T,n>::type} over variable base type \lstinline{T} 
(equal to \lstinline{double} in this case); see line 2. Custom non-member 
functions allow
for read/write access to values (\lstinline{dco::value}; lines 4 and 10) 
and tangents (\lstinline{dco::derivative}; lines 5 and 12). 
Line 7 runs the overloaded implicit Euler method over variables of type
\lstinline{dco::gt1v<double,n>::type}. Appropriate instances of all functions involved are generated automatically by the compiler based on the given 
C++ templates. The result overwrites \lstinline{x_t} with the required
solution for the initial value problem 
and with its Jacobian. Both are used to evaluate
the differential inverse 
$$
E'(\check{t}, \check{m}, x_0)^{-1} \cdot  
E(\check{t}, \check{m}, x_0)
$$
as follows:
\begin{lstlisting}
VT<double> DifferentialInverse(const VT<double> &x) {
  std::pair<VT<double>,MT<double>> p=E_dEdx(x);
  return p.second.lu().solve(p.first);
}
\end{lstlisting}

\subsection{Implementation of Algorithm~\ref{alg3.2}}

Symbolic evaluation of 
$E'$ yields a modified version of 
\lstinline{E_dEdx}.
\begin{lstlisting}[numbers=left,numberstyle=\scriptsize]
std::pair<VT<double>,MT<double>> E_dEdx(VT<double> x) {
  MT<double> E_x=MT<double>::Identity();
  for (int i=0;i<m;i++) {
    x=N(x);
    E_x=dFdx(x).lu().solve(E_x);
  }
  return std::make_pair(x,E_x);
}
\end{lstlisting}
Differential inverses in the Cartesian basis directions (line 2) are 
propagated in line 5 alongside the implicit Euler steps evaluated in line 4. 
The other code remains unchanged.

\subsection{Implementation of Algorithm~\ref{alg3.3}}

The function \lstinline{E_dEdx} is no longer required by the implementation of
Algorithm~\ref{alg3.3}. In
\begin{lstlisting}[numbers=left,numberstyle=\scriptsize]
VT<double> DifferentialInverse(VT<double> x) {
  x=E(x);
  while (!tape.empty()) { x=tape.top()*x; tape.pop(); }
  return x;
}
\end{lstlisting}
the solution for the initial value problem is computed in line 2
followed by a sequence of matrix-vector products in line 3 with 
Jacobians of all implicit Euler steps stored on a stack
\begin{lstlisting}
std::stack<MT<double>> tape;
\end{lstlisting}
The Jacobians are pushed onto the {\em tape}\footnote{The term ``tape'' is motivated by the conceptual similarity of differential inversion with adjoint AD, where tapes are used for data flow reversal.} at the end of each implicit Euler step; see line 7 in the following.
\begin{lstlisting}[numbers=left,numberstyle=\scriptsize]
VT<double> N(VT<double> x) {
  VT<double> x_prev=x,r=F(x,x_prev);
  do {
    x=x+dFdx(x).lu().solve(-r);
    r=F(x,x_prev);
  } while (r.norm()>1e-12);
  tape.push(dFdx(x));
  return x;
}
\end{lstlisting}
The program
\begin{lstlisting}
int main(){
  std::cout << DifferentialInverse(VT<double>::Ones()) 
            << std::endl;
  return 0;
}
\end{lstlisting}
assumes initially unit population sizes resulting in the solution
$$
x(\check{t})=
\begin{pmatrix}
1.31161 \\
0.593445
\end{pmatrix} \; .
$$

\begin{figure}
%\centering \includegraphics[width=.75\textwidth]{figures/system/time_system.pdf}
\centering \includegraphics[width=.75\textwidth]{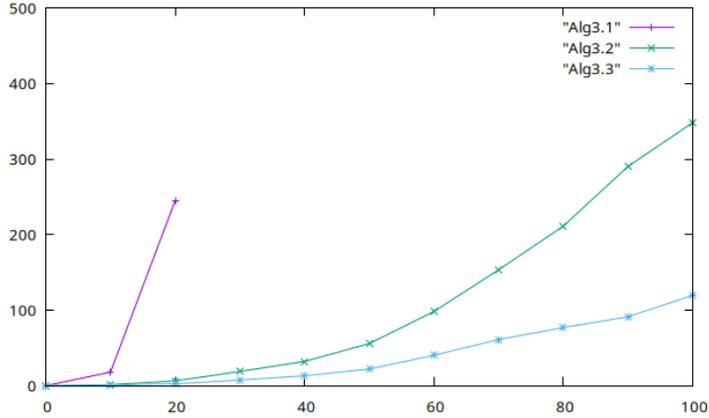}
	\caption{User run times (in $s$) for growing values of $n$ } \label{fig:A1}
\end{figure}

\begin{figure}
%\centering \includegraphics[width=.75\textwidth]{figures/elapsed/time_elapsed.pdf}
\centering \includegraphics[width=.75\textwidth]{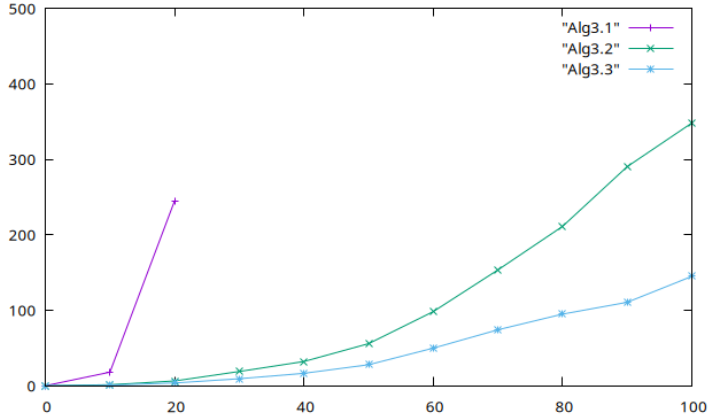}
	\caption{Elapsed run times (in $s$) for growing values of $n$} \label{fig:A2}
\end{figure}

\begin{figure}
%\centering \includegraphics[width=.75\textwidth]{figures/rss/rss.pdf}
\centering \includegraphics[width=.75\textwidth]{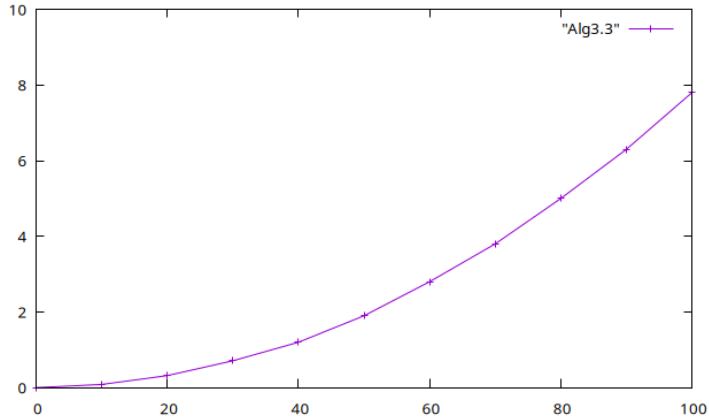}
	\caption{Evolution of resident set size (in $GB$) of Algorithm~\ref{alg3.3} for growing values of $n$} \label{fig:A3}
\end{figure}

\section{Experiments} \label{sec:5}

Our 
run time experiments aim to relate the theoretically obtained
computational complexities to an actual compute environment.\footnote{Intel Core I7, $16GB$ of RAM, GNU C++ compiler, Ubuntu Linux} Therefore, we use a 
generalized, scalable (in $n$) version of the Lotka-Volterra equations defined as 
$$
\frac{d x_k}{d t} = x_k \cdot f(x) \; ,
$$
where $f(x)=r+A \cdot x$ for given $r \in \R^n$ and 
$A \in \R^{n \times n};$ see \cite{Hofbauer1998EGa} for further details. For 
all three algorithms, we compare user and elapsed run times for ten\footnote{Better stability of the measured run times can thus be expected even for smaller problem instances.} 
differential inversions of $10^5$ implicit Euler steps for 
$n=0,10,\ldots,100$
in Figures~\ref{fig:A1} and \ref{fig:A2}, respectively. As predicted, Algorithm~\ref{alg3.1} becomes computationally expensive even for relatively small 
values of $n.$ 
Algorithm~\ref{alg3.3} ``beats'' Algorithm~\ref{alg3.2} by increasing factors.
The gap is smaller when considering elapsed run times due to additional system 
time to be devoted to handling storage and recovery of the Jacobians of the 
implicit Euler steps in Algorithm~\ref{alg3.3}.
The evolution of the corresponding stack size is shown in Figure~\ref{fig:A3}. 

\section{Conclusion} \label{sec:6}

Conceptually, differential inversion of a differentially invertible composite function
$$f(x)=f_m ( f_{m-1} ( \ldots f_1(x) \ldots ) : \R^n \rightarrow \R^n$$ 
involves two steps:
\begin{enumerate}
	\item accumulation of the Jacobian $f'=\frac{d f}{d x} \in \R^{n \times n};$
	\item solution of the linear system $f' \cdot w = v$ for a given $v \in \R^n,$
\end{enumerate}
yielding
$
w=(f')^{-1} \cdot v. 
$
The computational effort is typically dominated by the former, as the cost 
of evaluating $f$ often exceeds $\mathcal{O}(n^2).$ 
Symbolic (as well as structural) analysis of $f$ may yield options for 
avoiding the accumulation of the Jacobian. A notable gain in computational 
performance can be expected. 
Implicit Euler integration 
of initial value problems represents one prominent example, where $f$ amounts
to the sequence of Euler steps. Even the solution of the linear system becomes
obsolete in this case. The computational cost of the naive approach can thus be reduced by a factor of $\mathcal{O}(n^2).$

\end{document}